\theoremstyle{theorem}
\newtheorem{theorem}{Theorem}[section]
\newtheorem{lemma}[theorem]{Lemma}
\newtheorem{corollary}[theorem]{Corollary}
\theoremstyle{definition}
\newcommand{\interior}{
 {\mathrm{int} \,}
}
\newcommand{\punc}{
 {\mathrm{punc} \,}
}
\begin{document}

\title[On the knot quandle of a fibered knot]{On the knot quandle of a fibered knot, \\ finiteness and equivalence of knot quandles}
\author{Ayumu Inoue}
\address{Department of Mathematics, Tsuda University, 2-1-1 Tsuda-machi, Kodaira-shi, Tokyo 187-8577, Japan}
\email{ayminoue@tsuda.ac.jp}

\subjclass[2010]{57Q45, 57M25, 57M27}
\keywords{quandle, knot quandle, fibered knot, twist-spin of a knot}

\begin{abstract}
We show that the structure of a fibered knot, as a fiber bundle, is reflected in its knot quandle.
As an application, we discuss finiteness and equivalence of knot quandles of concrete fibered 2-knots.
\end{abstract}

\maketitle

\section{Introduction}
\label{sec:introduction}

A quandle is an algebraic system, which has good chemistry with knot theory.
Associated with a knot, we have its knot quandle in a similar manner to the knot group.
Here and throughout this paper, a knot means the image of a smooth embedding of a closed, oriented, and connected $n$-dimensional manifold $M^{n}$ into the $(n+2)$-sphere $S^{n+2}$ with some $n \geq 1$.
We call it an $n$-knot if $M^{n}$ is homeomorphic to $S^{n}$.
In contrast with the similarity of their definitions, there are differences between behavior of knot quandles and knot groups.
It is known, by Joyce \cite{Joyce1982} and Matveev \cite{Matveev1982}, that every 1-knots are completely distinguished by their knot quandles up to equivalence, while knot groups do not.
Cardinalities of knot quandles are enabled to be finite, while those of knot groups are always infinite.
Indeed, the cardinality of the knot quandle of the trivial $n$-knot is obviously equal to 1 for any $n$.

An $n$-knot is said to be fibered if its complement has a `good' fiber bundle structure over $S^{1}$.
In this paper, we show that the knot quandle of a fibered knot is isomorphic to a quandle determined by the fundamental group of the fiber, its subgroup corresponding to the boundary, and the monodromy (Theorem \ref{thm:fibered}).
It asserts that the structure of a fibered knot, as a fiber bundle, might be reflected in objects obtained from its knot quandle, e.g., in quandle cocycle invariants \cite{CEGS2005}.

The $m$-twist-spin of an $n$-knot, introduced by Zeeman \cite{Zeeman1965}, is a typical method to obtain a fibered $(n+1)$-knot ($m \geq 1$).
As an application of Theorem \ref{thm:fibered}, we see that the cardinality of the knot quandle of the $m$-twist-spun trefoil is finite if (and only if) $1 \leq m \leq 5$ (Theorem \ref{thm:twist_spun_trefoil}).
We further see that the knot quandle of the $2$-twist-spun $2$-bridge knot of type $(p, q)$ is isomorphic to the dihedral quandle of order $p$ (Theorem \ref{thm:2bridge}).
It asserts that there are infinitely many $l$-tuples of mutually inequivalent $2$-knots having the same knot quandle for any $l \geq 2$ (Corollary \ref{cor:2bridge_2}).
We thus have an extension of the theorem, given by Tanaka \cite{Tanaka2007}, that there are infinitely many pairs of inequivalent 2-knots having the same knot quandle.

\section{Quandle}
\label{sec:quandle}

In this section, we review some notions about quandles briefly.
We refer the reader to \cite{CKS2004, EN2015, Joyce1982, Kamada2017} for more details.

A \emph{quandle} is a non-empty set $X$ equipped with a binary operation $\ast : X \times X \rightarrow X$ satisfying the following three axioms:
\begin{itemize}
\item[(Q1)]
For each $x \in X$, $x \ast x = x$
\item[(Q2)]
For each $x \in X$, a map $\ast \; x : X \rightarrow X$ ($w \mapsto w \ast x$) is bijective
\item[(Q3)]
For each $x, y, z \in X$, $(x \ast y) \ast z = (x \ast z) \ast (y \ast z)$
\end{itemize}
Notions of homomorphism and isomorphism are appropriately defined for quandles.

A typical example of a quandle is obtained from a group.
Let $G$ be a group and $\varphi$ an automorphism of $G$.
Then it is easy to see that the binary operation $\ast$ on $G$ defined by
\[
 x \ast y
 = \varphi(x y^{-1}) y
\]
satisfies the axioms of a quandle.
The quandle $(G, \ast)$ is usually called a \emph{generalized Alexander quandle} and denoted by $(G, \varphi)$.
Moreover, let $H$ be a subgroup of $G$ whose elements are fixed by $\varphi$.
Then the set of right cosets of $H$ in $G$, $H \backslash G$, is also a quandle with the binary operation $\ast$ given by
\[
 H x \ast H y
 = H \varphi(x y^{-1}) y.
\]
We refer to this quandle $(H \backslash G, \ast)$ as the \emph{quotient} of $(G, \varphi)$ by $H$.
Obviously, this quandle is isomorphic to $(G, \varphi)$ if $H$ is trivial.

We here introduce two basic quandles, both of which are generalized Alexander quandles on cyclic groups.
Suppose $G$ is a cyclic group $\mathbb{Z} / p \mathbb{Z}$ ($p \geq 1$), $\mathrm{id}$ the identity map of $G$, and $\mathrm{inv}$ the standard involution of $G$ mapping $a$ to $- a$.
The \emph{trivial quandle} of order $p$ is the generalized Alexander quandle $(G, \mathrm{id})$.
The \emph{dihedral quandle} of order $p$ is the generalized Alexander quandle $(G, \mathrm{inv})$.
Obviously, the trivial quandle of order $1$ and the dihedral quandle of order $1$ are the same quandle.
Therefore we only consider dihedral quandles whose orders are greater than or equal to $2$.

Associated with a knot $K$, we have its knot quandle as follows.
Suppose $N(K)$ is a tubular neighborhood of $K$ and $E(K) = S^{n+2} \setminus \interior{N(K)}$ the exterior.
Here, we assume that $N(K)$ is closed.
A (positive) \emph{meridional disk} $D$ of $K$ is an oriented disk properly embedded into $N(K)$ so that $K$ intersects with $D$ transversely only at one point with intersection number $+1$.
Choose and fix a point $p \in E(K)$.
A \emph{noose} of $K$ is a pair of a meridional disk $D$ and a path $\alpha$ in $E(K)$ from $\partial D$ to $p$.
The left-hand side of Figure \ref{fig:noose} depicts an image of a noose.
Two nooses $(D, \alpha)$ and $(E, \beta)$ are said to be \emph{homotopic} to each other if there is a pair of homotopies $H_{D} : D^{2} \times [0, 1] \rightarrow N(K)$ and $H_{P} : [0, 1] \times [0, 1] \rightarrow E(K)$ satisfying the following conditions:
\begin{itemize}
\item
For each $t \in [0, 1]$, $(H_{D}(D^{2} \times \{ t \}), H_{P}|_{[0, 1] \times \{ t \}})$ is a noose of $K$
\item
$(H_{D}(D^{2} \times \{ 0 \}), H_{P}|_{[0, 1] \times \{ 0 \}}) = (D, \alpha)$
\item
$(H_{D}(D^{2} \times \{ 1 \}), H_{P}|_{[0, 1] \times \{ 1 \}}) = (E, \beta)$
\end{itemize}
We denote the homotopy class of a noose $(D, \alpha)$ by $[(D, \alpha)]$.
Let $Q(K)$ be the set consisting of the homotopy classes of nooses of $K$.
We define a binary operation $\ast$ on $Q(K)$ by
\[
 [(D, \alpha)] \ast [(E, \beta)]
 = [(D, \alpha \cdot \beta^{-1} \cdot \partial E \cdot \beta)],
\]
where $\partial E$ denotes the loop starting at the initial point of $\beta$ and going round the boundary of $E$ once along the orientation.
The right-hand side of Figure \ref{fig:noose} illustrates what happens if we take this operation.
It is routine to check that $\ast$ satisfies the axioms of a quandle.
We refer to the quandle $(Q(K), \ast)$ as the \emph{knot quandle} of $K$ and simply denote it by $Q(K)$.
By definition, it is obvious that equivalent knots have the same knot quandle.
\begin{figure}[htbp]
 \begin{center}
  \includegraphics[scale=0.25]{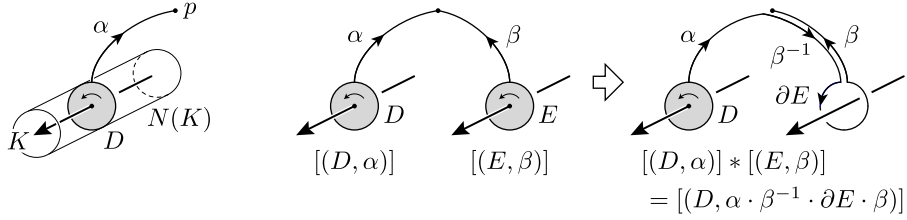}
 \end{center}
 \caption{A noose $(D, \alpha)$ (left) and the operation $\ast$ on $Q(K)$ (right)}
 \label{fig:noose}
\end{figure}

\section{The knot quandle of a fibered knot}
\label{sec:the_knot_quandle_of_a_fibered_knot}

An $n$-knot $K$ is said to be \emph{fibered} if there is a fibration $\pi : S^{n+2} \setminus K \rightarrow S^{1}$ such that each closure of $\pi^{-1}(\{ s \})$ is a Seifert surface of $K$ ($s \in S^{1}$).
Here, a Seifert surface of $K$ is an orientable $(n + 1)$-dimensional manifold, with boundary $S^{n}$, embedded into $S^{n+2}$ smoothly so that $K$ bounds it.
In this section, we see that the structure of a fibered knot, as a fiber bundle, is reflected in its knot quandle.
To state the claim correctly, we start with preparing some notions and notations.

Let $K$ be a fibered $n$-knot and $\pi : S^{n+2} \setminus K \rightarrow S^{1}$ its fibration with fiber $F$.
We refer to $F$ as the \emph{fiber} of $K$.
Regarding $S^{1}$ as the unit circle in the complex plane, we let $F_{0}$ be the preimage $\pi^{-1}(\{ 1 \})$.
Associated with the universal covering $\mathbb{R} \rightarrow S^{1}$ ($t \mapsto \exp \left( 2 t \pi \sqrt{-1} \right)$) of the base space of $\pi$, we have the infinite cyclic covering $\Psi : F \times \mathbb{R} \rightarrow S^{n+2} \setminus K$.
We note that, by definition, the preimage $\Psi^{-1}(F_{0})$ coincides with $F \times \mathbb{Z}$.

Suppose $N(K)$ is a tubular neighborhood of $K$ and $E(K) = S^{n+2} \setminus \interior{N(K)}$ the exterior.
Choose and fix a meridional disk $D_{0}$ of $K$ and a point $p_{0} \in \partial E(K)$ at which $D_{0}$ intersects with $F_{0}$ transversely.
We adopt $p_{0}$ as base points of the knot quandle $Q(K)$ of $K$ and the fundamental group $G$ of $F$ identifying $F$ and $F_{0}$.

For each loop $\gamma$ in $F_{0}$ based at $p_{0}$, since its lift to $F \times \mathbb{R}$ is a loop, the loop $(\partial D_{0})^{-1} \cdot \gamma \cdot \partial D_{0}$ is homotopic to a loop $\delta$ in $F_{0}$ relative to the base point $p_{0}$.
We thus have the automorphism $\varphi : G \rightarrow G$ sending $[\gamma]$ to $[\delta]$.
We refer to $\varphi$ as the \emph{monodromy} of $K$.

Let $H$ be the subgroup of $G$ consisting of the homotopy classes of loops in $F_{0} \cap N(K)$ based at $p_{0}$.
We note that this subgroup $H$ corresponds to $\partial F$, because the closure of $F_{0} \cap N(K)$ retracts to $\partial F_{0}$.
Since $F_{0} \cap N(K)$ retracts to $F_{0} \cap \partial N(K)$, each element of $H$ has a representative $\gamma$ in $F_{0} \cap \partial N(K)$.
Further, since $\partial N(K)$ is homeomorphic to $S^{n} \times S^{1}$, $(\partial D_{0})^{-1} \cdot \gamma \cdot \partial D_{0}$ is homotopic to $\gamma$ relative to $p_{0}$.
It asserts that the monodromy $\varphi$ fixes each element of $H$.
We are thus allowed to consider the quotient of the generalized Alexander quandle $(G, \varphi)$ by $H$.

We are now able to express our claim explicitly as follows.

\begin{theorem}
\label{thm:fibered}
Let $K$, $G$, $H$, and $\varphi$ be as above, i.e., $K$ a fibered $n$-knot, $F$ its fiber, $G$ the fundamental group of $F$, $H$ the subgroup of $G$ corresponding to $\partial F$, and $\varphi$ the monodromy of $K$.
Then the knot quandle $Q(K)$ of $K$ is isomorphic to the quotient of the generalized Alexander quandle $(G, \varphi)$ by $H$.
\end{theorem}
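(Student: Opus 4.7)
The plan is to construct a map $\Phi : G \to Q(K)$ by $[\gamma] \mapsto [(D_0, \gamma)]$, verify it is a quandle homomorphism from $(G, \varphi)$ that factors through the quotient by $H$, and show the resulting map $\bar\Phi : (G, \varphi)/H \to Q(K)$ is bijective.

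To set up $\Phi$, represent any class in $G = \pi_1(F_0, p_0)$ by a loop in $F_0 \cap E(K)$ (using the collar retract) so that it is a valid noose path; well-definedness is then automatic, since a homotopy in $F_0$ rel $p_0$ is itself a homotopy of nooses with the disk fixed at $D_0$. Throughout the argument I rely on two basic noose-homotopy moves: (i) a \emph{disk-rotation} move $(D_0, \partial D_0 \cdot \beta) \simeq (D_0, \beta)$, realized by keeping $D_0$ fixed while sliding the starting point of the path once around $\partial D_0$; and (ii) a \emph{disk-sliding} move $(D_0, \eta \cdot \gamma) \simeq (D_0, \gamma)$ for a loop $\eta$ in $F_0 \cap \partial N(K)$, realized from the trivialization $N(K) \cong K \times D^2$ by taking meridional disks $D_t$ with $\partial D_t \ni \eta(t)$. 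To verify the quandle homomorphism law, set $\eta = \gamma_1 \gamma_2^{-1}$ and use the defining relation $(\partial D_0)^{-1} \cdot \eta \cdot \partial D_0 \simeq \varphi(\eta)$ of the monodromy to get $\eta \cdot \partial D_0 \cdot \gamma_2 \simeq \partial D_0 \cdot \varphi(\eta) \cdot \gamma_2$ in $E(K)$ rel $p_0$; applying (i) to absorb the leading $\partial D_0$ then yields $\Phi([\gamma_1]) \ast \Phi([\gamma_2]) = \Phi([\gamma_1] \ast [\gamma_2])$. Applying (ii) with $[\eta] \in H$ shows $\Phi([\eta][\gamma]) = \Phi([\gamma])$, so $\Phi$ descends to $\bar\Phi : (G, \varphi)/H \to Q(K)$.

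For surjectivity, any noose can first be homotoped to the form $(D_0, \alpha')$ with $\alpha'$ a loop at $p_0$ in $E(K)$. Lifting $\alpha'$ via the infinite cyclic covering $\Psi$ from $(p_0, 0) \in F \times \{0\}$ gives an endpoint $(p_0, k)$ for some $k \in \mathbb{Z}$ (the winding number of $\alpha'$ under $\pi$). Applying (i) a total of $k$ times gives $(D_0, \alpha') \simeq (D_0, \partial D_0^{-k} \cdot \alpha')$, and the new path, having trivial winding number, lifts to a loop in $F \times \mathbb{R}$, which deformation retracts to $F \times \{0\}$; hence $\partial D_0^{-k} \cdot \alpha'$ is homotopic rel $p_0$ to a loop in $F_0$, putting the original noose in the image of $\bar\Phi$.

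The main obstacle is injectivity. Suppose $(D_0, \gamma_1) \simeq (D_0, \gamma_2)$ via a noose homotopy $(H_D, H_P)$ with $\gamma_1, \gamma_2$ loops in $F_0$. The track $\beta(t) = H_P(0, t)$ is a loop in $\partial N(K)$ based at $p_0$, and reading the boundary of the square $H_P : [0, 1]^2 \to E(K)$ shows $\gamma_1 \cdot \gamma_2^{-1} \cdot \beta^{-1}$ is null-homotopic, so $\gamma_1 \gamma_2^{-1} = \beta$ in $\pi_1(E(K), p_0)$. The restriction of $\pi$ to $\partial N(K)$ is a fibration with fiber $F_0 \cap \partial N(K) \cong S^n$, giving the exact sequence $\pi_1(F_0 \cap \partial N(K)) \to \pi_1(\partial N(K)) \to \pi_1(S^1) \to 0$, which identifies the kernel of the projection to $\pi_1(S^1)$ with the image of $\pi_1(F_0 \cap \partial N(K))$. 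Because the $\gamma_i$ lie in the single fiber $F_0$, $\pi_\ast(\gamma_1 \gamma_2^{-1}) = 0$, hence $\pi_\ast(\beta) = 0$; exactness then places $\beta$ in the image of $\pi_1(F_0 \cap \partial N(K))$, i.e.\ in $H \subset G$. Since $G$ injects into $\pi_1(E(K)) = G \rtimes_\varphi \mathbb{Z}$, the equality $\gamma_1 \gamma_2^{-1} = \beta$ persists inside $G$, giving $\gamma_1 \gamma_2^{-1} \in H$ and therefore $H \gamma_1 = H \gamma_2$.
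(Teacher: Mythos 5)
Your proposal is correct and follows essentially the same route as the paper: the same map $H[\gamma]\mapsto[(D_{0},\gamma)]$, the same surjectivity argument via the infinite cyclic cover and unwinding powers of $\partial D_{0}$, and the same computation for the quandle operation. The only divergence is in the injectivity step, where the paper reparametrizes the noose homotopy explicitly and lifts the resulting boundary loop to $F\times\mathbb{R}$, while you read off the track of the initial point and invoke the homotopy exact sequence of $\pi|_{\partial N(K)}$ together with the injectivity of $G$ in $\pi_{1}(E(K))\cong G\rtimes_{\varphi}\mathbb{Z}$ --- a tidier algebraic packaging of the same idea.
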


Since $H$ is trivial if $n \geq 2$, we have the following corollary immediately.

\begin{corollary}
\label{cor:fibered}
Let $K$, $G$, and $\varphi$ be as above, i.e., $K$ a fibered $n$-knot, $F$ its fiber, $G$ the fundamental group of $F$, and $\varphi$ the monodromy of $K$.
Then the knot quandle $Q(K)$ of $K$ is isomorphic to the generalized Alexander quandle $(G, \varphi)$ if $n$ is greater than or equal to $2$.
\end{corollary}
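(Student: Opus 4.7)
The plan is to construct an explicit isomorphism $\Phi\colon H\backslash G\to Q(K)$, defined on representatives by
\[
 \Phi(Hg) \;=\; [(D_{0},\gamma)],
\]
where $\gamma$ is any loop in $F_{0}$ based at $p_{0}$ with $[\gamma]=g$, viewed as a path from $\partial D_{0}$ to $p_{0}$ in $E(K)$ (its two endpoints both being $p_{0}\in\partial D_{0}$). Writing $\mu=[\partial D_{0}]$, the fibration $\pi$ identifies $\pi_{1}(E(K),p_{0})$ with the semidirect product $G\rtimes_{\varphi}\mathbb{Z}$ in which $\mu$ generates the $\mathbb{Z}$-factor, and the defining relation $\mu^{-1}g\mu=\varphi(g)$ is equivalent to the multiplication rule $g\mu^{k}\cdot g'\mu^{k'}=g\,\varphi^{-k}(g')\,\mu^{k+k'}$.

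The crux is a topological lemma: the assignment $(D_{0},\alpha)\mapsto[\alpha]\in\pi_{1}(E(K),p_{0})$ — after first isotoping any noose into the form $(D_{0},\alpha)$ with $\alpha$ a loop at $p_{0}$ — descends to a bijection
\[
 Q(K) \;\longleftrightarrow\; \pi_{1}(E(K),p_{0})\,\big/\,\pi_{1}(\partial N(K),p_{0})
\]
of sets. To prove this, I would show that two such nooses $(D_{0},\alpha_{1})$ and $(D_{0},\alpha_{2})$ are homotopic if and only if $[\alpha_{1}][\alpha_{2}]^{-1}$ is peripheral. The ``only if'' direction comes from tracking the moving endpoint $\alpha_{t}(0)\in\partial D_{t}\subseteq\partial N(K)$ during a noose homotopy and closing up the paths $\alpha_{t}$ into loops at $p_{0}$; the trace of that endpoint is the required peripheral loop. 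The converse is an explicit construction: any $\tau\in\pi_{1}(\partial N(K),p_{0})$ is realized as the boundary trace of an ambient isotopy of $D_{0}$ inside $N(K)\cong D^{2}\times S^{n}$, which yields a noose homotopy $(D_{0},\tau\alpha)\sim(D_{0},\alpha)$.

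Next, an algebraic step identifies the right-hand side of that bijection with $H\backslash G$. Since $\varphi$ fixes $H$ pointwise, the subgroups $H$ and $\langle\mu\rangle$ commute in $G\rtimes_{\varphi}\mathbb{Z}$; using the retraction $F_{0}\cap N(K)\to\partial F_{0}$ recalled before the statement, the peripheral subgroup equals $H\cdot\langle\mu\rangle$ (for $n\geq 2$ one has $H=1$; for $n=1$ the longitude $\lambda$ generates $H$). Each left coset of $H\langle\mu\rangle$ meets $G$ in a single $H$-coset: starting from $g\mu^{k}$, left-multiplication by $\mu^{-k}$ produces $\varphi^{k}(g)\in G$, well-defined modulo $H$. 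This gives the canonical bijection $g\mu^{k}\mapsto H\varphi^{k}(g)$, whose inverse is the map $\Phi$ above.

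Finally I would verify that $\Phi$ is a quandle homomorphism by a direct computation in $G\rtimes_{\varphi}\mathbb{Z}$:
\[
 \gamma_{1}\cdot\gamma_{2}^{-1}\cdot\mu\cdot\gamma_{2}
 \;=\;\bigl(\gamma_{1}\gamma_{2}^{-1}\varphi^{-1}(\gamma_{2})\bigr)\mu,
\]
so the $H\backslash G$-image of the noose product $[(D_{0},\gamma_{1})]\ast[(D_{0},\gamma_{2})]$ is $H\,\varphi\bigl(\gamma_{1}\gamma_{2}^{-1}\varphi^{-1}(\gamma_{2})\bigr)=H\,\varphi(g_{1}g_{2}^{-1})g_{2}=Hg_{1}\ast Hg_{2}$, matching the quandle structure on the quotient of $(G,\varphi)$ by $H$. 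The main obstacle I expect is the ``if'' direction of the topological lemma, which requires producing an explicit ambient isotopy of $D_{0}$ in $N(K)$ whose boundary trace realizes a prescribed peripheral loop; once this is in place, everything else is bookkeeping inside the semidirect product.
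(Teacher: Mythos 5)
Your argument is correct, but it reaches the conclusion by a genuinely different route from the paper's. The paper gets the corollary in one line from Theorem \ref{thm:fibered} (as $H$ is trivial for $n \geq 2$), and proves that theorem by manipulating nooses directly in the fiber: Lemma \ref{lem:fibered_1} uses the infinite cyclic cover $F \times \mathbb{R}$ to homotope an arbitrary noose to one of the form $(D_{0}, \beta)$ with $\beta$ a loop in $F_{0} \cap E(K)$, absorbing the power of the meridian geometrically by rotating $D_{0}$ (Figure \ref{fig:winding}), and Lemma \ref{lem:fibered_2} shows two such nooses are homotopic iff their loops differ on the left by an element of $H$. You instead first establish the Joyce-type coset model $Q(K) \cong P \backslash \pi_{1}(E(K), p_{0})$ with $P$ the peripheral subgroup, and then perform the unwinding algebraically inside $G \rtimes_{\varphi} \mathbb{Z}$, identifying $P \backslash (G \rtimes_{\varphi} \mathbb{Z})$ with $H \backslash G$ via $g \mu^{k} \mapsto H \varphi^{k}(g)$; your verification of the quandle operation in the semidirect product is correct and agrees with the paper's computation. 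What your route buys is a clean separation of the topology (the coset presentation of $Q(K)$, standard but nontrivial) from pure algebra; its cost is that you must prove the coset model in full, and you correctly identify its ``if'' direction as the delicate step --- note that for $n \geq 2$ the peripheral subgroup is just $\langle \mu \rangle$, so the required noose homotopy is exactly the rotation of $D_{0}$ that the paper already supplies, while the identification $P = H \cdot \langle \mu \rangle$ you use in general relies on $\varphi$ fixing $H$ pointwise, which is the observation the paper makes just before stating the theorem. Both proofs are sound; yours proves somewhat more than the corollary requires (the full statement with nontrivial $H$), which is harmless.
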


To prove the theorem, we first show the following two lemmas.

\begin{lemma}
\label{lem:fibered_1}
For each noose $(D, \alpha)$ of $K$, there is a loop $\beta$ in $F_{0} \cap E(K)$ based at $p_{0}$ such that the noose $(D_{0}, \beta)$ is homotopic to $(D, \alpha)$.
\end{lemma}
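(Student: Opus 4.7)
The plan is three-stage: reduce to the reference disk $D_{0}$, use the infinite cyclic cover to locate the path up to meridian powers, and then apply the monodromy to absorb the meridian powers into the noose structure. First, any two meridional disks of $K$ are isotopic through meridional disks, because the meridians on $\partial N(K)\cong S^{n}\times S^{1}$ are mutually parallel; carrying $\alpha$ along this isotopy yields a noose $(D_{0},\alpha')$ with $\alpha'$ a path from $\partial D_{0}$ to $p_{0}$ in $E(K)$. Since $p_{0}\in\partial D_{0}$, sliding the initial endpoint of $\alpha'$ along $\partial D_{0}$ to $p_{0}$ is a legal noose homotopy and turns $\alpha'$ into a loop $\alpha^{*}$ based at $p_{0}$.

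For the second stage, set $d:=\deg(\pi\circ\alpha^{*})\in\pi_{1}(S^{1})$. Because $\pi_{*}[\partial D_{0}]$ generates $\pi_{1}(S^{1})$, the loop $\alpha^{*}\cdot(\partial D_{0})^{-d}$ lies in $\ker\pi_{*}$ and lifts through $\Psi$ to a loop $\tilde{\gamma}$ in $F\times\mathbb{R}$ based at $(p_{0},0)$. The linear retraction of $F\times\mathbb{R}$ onto $F\times\{0\}$ fixes this basepoint and makes $\tilde{\gamma}$ homotopic rel $(p_{0},0)$ to a loop in $F\times\{0\}$; projecting by $\Psi$ yields a loop $\beta$ in $F_{0}$ at $p_{0}$ with $\alpha^{*}\cdot(\partial D_{0})^{-d}\simeq\beta$ rel $p_{0}$, hence
\[
 \alpha^{*}\;\simeq\;(\partial D_{0})^{d}\cdot\bigl((\partial D_{0})^{-d}\cdot\beta\cdot(\partial D_{0})^{d}\bigr).
\]
Iterating the defining property of the monodromy $\varphi$ shows the bracketed loop is homotopic rel $p_{0}$ to a loop $\beta'$ in $F_{0}$ representing $\varphi^{d}[\beta]$. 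Finally, the noose $(D_{0},(\partial D_{0})^{d}\cdot\beta')$ is noose-homotopic to $(D_{0},\beta')$ by sliding the initial endpoint $d$ times around $\partial D_{0}$, absorbing the prefactor. Composing with the first stage gives $(D,\alpha)\simeq(D_{0},\beta')$ with $\beta'$ a loop at $p_{0}$ in $F_{0}$, as required.

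The main obstacle I expect is keeping every homotopy inside $E(K)$ rather than in the larger ambient $S^{n+2}\setminus K$: the deformation retraction of $F\times\mathbb{R}$ onto $F\times\{0\}$ and the monodromy rewriting naturally live in the complement, so to conclude that $\beta'$ can be taken in $F_{0}\cap E(K)$ one must use that $F_{0}\cap E(K)$ is a deformation retract of $F_{0}$ and that $\partial N(K)$ is compatibly fibered by $\pi$, allowing any collar of $K$ in $F_{0}$ to be pushed out of $N(K)$ while keeping the basepoint $p_{0}\in\partial E(K)\cap F_{0}$ fixed.
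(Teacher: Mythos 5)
Your proof is correct and takes essentially the same route as the paper: reduce to the reference disk $D_{0}$ with the path based at $p_{0}$, lift to the infinite cyclic cover $F\times\mathbb{R}$ to split the loop into a power of $\partial D_{0}$ and a loop in the fiber, and absorb the meridian power by rotating $D_{0}$, with the same final care about pushing the fiber loop into $F_{0}\cap E(K)$. The only cosmetic difference is that the paper chooses the lifted loop $\widetilde{\beta}$ at level $i$ of the cover so that the factor $(\partial D_{0})^{i}$ already sits in front, which makes your extra monodromy-conjugation step unnecessary.
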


\begin{proof}
Since $\partial E(K)$ is path-connected, we have a path $\gamma$ in $\partial E(K)$ from $p_{0}$ to the initial point of $\alpha$.
As illustrated in Figure \ref{fig:moving}, the noose $(D, \alpha)$ is homotopic to the noose $(D_{0}, \gamma \cdot \alpha)$.
\begin{figure}[htbp]
 \begin{center}
  \includegraphics[scale=0.25]{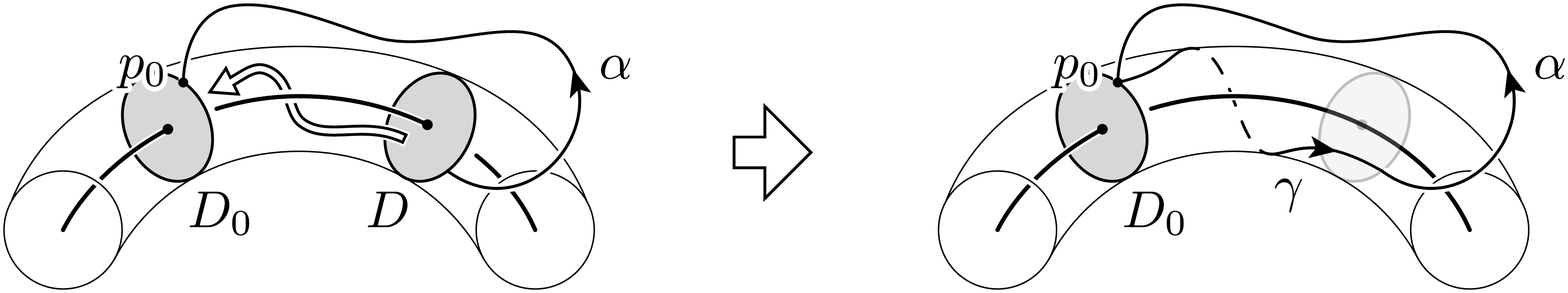}
 \end{center}
 \caption{Sliding $D$ to $D_{0}$ along $\gamma^{-1}$, we obtain $(D_{0}, \gamma \cdot \alpha)$ from $(D, \alpha)$}
 \label{fig:moving}
\end{figure}

Suppose $\widetilde{\gamma \cdot \alpha}$ is the lift of $\gamma \cdot \alpha$ to $F \times \mathbb{R}$ starting at $\Psi^{-1}(\{ p_{0} \}) \cap (F \times \{ 0 \})$.
We note that the terminal point of $\widetilde{\gamma \cdot \alpha}$ is then $\Psi^{-1}(\{ p_{0} \}) \cap (F \times \{ i \})$ with some $i \in \mathbb{Z}$.
Let $\widetilde{(\partial D_{0})^{i}}$ be the lift of $(\partial D_{0})^{i}$ to $F \times \mathbb{R}$ starting at $\Psi^{-1}(\{ p_{0} \}) \cap (F \times \{ 0 \})$ and thus ending at $\Psi^{-1}(\{ p_{0} \}) \cap (F \times \{ i \})$.
Then, since $F \times \mathbb{R}$ retracts to $F$, there is a loop $\widetilde{\beta}$ in $\Psi^{-1}(F_{0} \cap E(K)) \cap (F \times \{ i \})$ such that $\widetilde{(\partial D_{0})^{i}} \cdot \widetilde{\beta}$ is homotopic to $\widetilde{\gamma \cdot \alpha}$ relative to the end points.
We thus have the noose $(D_{0}, (\Psi \circ \widetilde{(\partial D_{0})^{i}}) \cdot (\Psi \circ \widetilde{\beta})) = (D_{0}, (\partial D_{0})^{i} \cdot (\Psi \circ \widetilde{\beta}))$ which is homotopic to $(D_{0}, \gamma \cdot \alpha)$.

As illustrated in Figure \ref{fig:winding}, the noose $(D_{0}, (\partial D_{0})^{i} \cdot (\Psi \circ \widetilde{\beta}))$ is homotopic to the noose $(D_{0}, \Psi \circ \widetilde{\beta})$.
Since the loop $\Psi \circ \widetilde{\beta}$ lies in $F_{0} \cap E(K)$, our claim is proved letting $\beta$ denote $\Psi \circ \widetilde{\beta}$.
\begin{figure}[htbp]
 \begin{center}
  \includegraphics[scale=0.25]{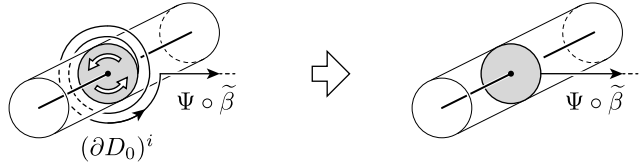}
 \end{center}
 \caption{Rotating $D_{0}$ appropriately, we obtain $(D_{0}, \Psi \circ \widetilde{\beta})$ from $(D_{0}, (\partial D_{0})^{i} \cdot (\Psi \circ \widetilde{\beta}))$}
 \label{fig:winding}
\end{figure}
\end{proof}

\begin{lemma}
\label{lem:fibered_2}
Let $\alpha$ and $\beta$ be loops in $F_{0} \cap E(K)$ based at $p_{0}$.
Then the noose $(D_{0}, \alpha)$ is homotopic to the noose $(D_{0}, \beta)$ if and only if $[\alpha]$ is an element of the right coset $H [\beta]$.
\end{lemma}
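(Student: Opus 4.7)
My plan is to prove both implications separately, leveraging two structural facts the fibered hypothesis provides. First, from the long exact sequence of the fibration $F \to E(K) \to S^{1}$ combined with $\pi_{2}(S^{1}) = 0$, the inclusion $F_{0} \hookrightarrow E(K)$ induces an injection $G = \pi_{1}(F_{0}, p_{0}) \hookrightarrow \pi_{1}(E(K), p_{0})$. Second, under the identification $\partial N(K) \cong S^{n} \times S^{1}$ in which $\pi|_{\partial N(K)}$ is projection onto the meridional $S^{1}$, the fiber meets $\partial N(K)$ in a longitudinal slice $F_{0} \cap \partial N(K) \cong S^{n} \times \{ 1 \}$ whose loops represent precisely elements of $H$.

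For the ``if'' direction, I would write $[\alpha] = [\eta][\beta]$ for some $[\eta] \in H$ and, using the retraction of $F_{0} \cap N(K)$ onto $F_{0} \cap \partial N(K)$, take $\eta$ to lie in the longitudinal slice. The equality $[\alpha] = [\eta \cdot \beta]$ in $\pi_{1}(F_{0}, p_{0})$, together with the fact that $F_{0} \cap E(K) \hookrightarrow F_{0}$ is a homotopy equivalence, gives a noose homotopy $(D_{0}, \alpha) \simeq (D_{0}, \eta \cdot \beta)$ in which $D_{0}$ is held fixed. Next I would slide $D_{0}$ around $\eta$: as $s$ runs from $0$ to $1$, put the disk at $\eta(s)$ and the path at $\eta|_{[s, 1]} \cdot \beta$. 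Because $\eta$ is longitudinal, the disk returns to $D_{0}$ at $s = 1$, producing the desired noose homotopy $(D_{0}, \eta \cdot \beta) \simeq (D_{0}, \beta)$.

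For the ``only if'' direction, given a noose homotopy $(D_{t}, \alpha_{t})$ from $(D_{0}, \alpha)$ to $(D_{0}, \beta)$, I would track the curve $q(t) = \alpha_{t}(0)$ on $\partial N(K)$; this is a loop at $p_{0}$ since both endpoint nooses use $D_{0}$ and paths based at $p_{0}$. The square $H_{P}$ fills in a boundary that reads $\alpha \cdot \beta^{-1} \cdot q^{-1}$, yielding $[\alpha][\beta]^{-1} = [q]$ in $\pi_{1}(E(K), p_{0})$. Since $\alpha$ and $\beta$ lie in the fiber $F_{0} = \pi^{-1}(\{ 1 \})$, one has $\pi_{*}([q]) = \pi_{*}([\alpha][\beta]^{-1}) = 0$; by the description of $\pi|_{\partial N(K)}$ above, the kernel of $\pi_{*} \colon \pi_{1}(\partial N(K)) \to \pi_{1}(S^{1})$ is represented by loops in $F_{0} \cap \partial N(K)$, so $[q]$ has a representative $\eta$ in $F_{0} \cap \partial N(K) \subset F_{0} \cap N(K)$, giving $[\eta] \in H$. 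Finally, the injectivity $G \hookrightarrow \pi_{1}(E(K), p_{0})$ upgrades the equality $[\alpha][\beta]^{-1} = [\eta]$ from $\pi_{1}(E(K), p_{0})$ back to $G$, so $[\alpha] \in H[\beta]$.

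The hard part is the ``only if'' direction, and within it the step of translating a noose homotopy — whose disk is free to wander through $N(K)$ — into an algebraic identity that lives not merely in $\pi_{1}(E(K), p_{0})$ but in $G$ modulo $H$. The fibered structure is indispensable for both bridging ingredients: the vanishing of the meridional component of $[q]$ via $\pi_{*}$, and the injectivity of $G \hookrightarrow \pi_{1}(E(K))$. Outside the fibered setting neither would be available, which explains why the lemma depends on this hypothesis.
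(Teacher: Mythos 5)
Your proof is correct and follows essentially the same route as the paper: the same disk-sliding argument for the ``if'' direction, and for the ``only if'' direction the same extraction of the trace loop $q$ of the disk's attaching point from the noose homotopy, yielding $[\alpha][\beta]^{-1} = [q]$ in $\pi_{1}(E(K), p_{0})$. The only cosmetic difference is that you push $q$ into $F_{0} \cap \partial N(K)$ and obtain injectivity of $G \rightarrow \pi_{1}(E(K), p_{0})$ from the long exact sequence of the fibration, whereas the paper argues by lifting to the infinite cyclic cover $F \times \mathbb{R}$; the two mechanisms are interchangeable.
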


\begin{proof}
We first assume that $[\alpha]$ is an element of $H [\beta]$.
Then, there is a loop $\gamma$ in $F_{0} \cap \partial E(K)$ such that $\gamma \cdot \beta$ is homotopic to $\alpha$.
Since the noose $(D_{0}, \gamma \cdot \beta)$ is homotopic to both of $(D_{0}, \alpha)$ and $(D_{0}, \beta)$, $(D_{0}, \alpha)$ is homotopic to $(D_{0}, \beta)$.

Conversely, we next assume that $(D_{0}, \alpha)$ is homotopic to $(D_{0}, \beta)$ by a pair of homotopies $H_{D} : D^{2} \times [0, 1] \rightarrow N(K)$ and $H_{P} : [0, 1] \times [0, 1] \rightarrow E(K)$ satisfying $(H_{D}(D^{2} \times \{ 0 \}), H_{P}|_{[0, 1] \times \{ 0 \}}) = (D, \alpha)$ and $(H_{D}(D^{2} \times \{ 1 \}), H_{P}|_{[0, 1] \times \{ 1 \}}) = (E, \beta)$.
We define homotopies $H_{D}^{\prime} : D^{2} \times [0, 1] \rightarrow N(K)$ and $H_{P}^{\prime} : [0, 1] \times [0, 1] \rightarrow E(K)$ by
\[
 H_{D}^{\prime}(x, t) =
 \begin{cases}
  H_{D}(x, 0) & \enskip \left( 0 \leq t \leq \dfrac{1}{2} \right), \\[2ex]
  H_{D}(x, 2 t - 1) & \enskip \left( \dfrac{1}{2} \leq t \leq 1 \right)
 \end{cases}
\]
and
\[
 H_{P}^{\prime}(s, t) =
 \begin{cases}
  H_{P}(0, 2 s) & \enskip \left( 0 \leq s \leq t, \, 0 \leq t \leq \dfrac{1}{2} \right), \\[2ex]
  H_{P} \left( \dfrac{s - t}{1 - t}, 2 t \right) & \enskip \left( t \leq s \leq 1, \, 0 \leq t \leq \dfrac{1}{2} \right), \\[2ex]
  H_{P}(0, 2 s + 2 t - 1) & \enskip \left( 0 \leq s \leq 1 - t, \, \dfrac{1}{2} \leq t \leq 1 \right), \\[2ex]
  H_{P} \left( \dfrac{s + t - 1}{t}, 1 \right) & \enskip \left( 1 - t \leq s \leq 1, \, \dfrac{1}{2} \leq t \leq 1 \right).
 \end{cases}
\]
Then, obviously, the pair of homotopies $H_{D}^{\prime}$ and $H_{P}^{\prime}$ is also a homotopy between $(D_{0}, \alpha)$ and $(D_{0}, \beta)$.
This homotopy asserts that the loop $\alpha \cdot \beta^{-1}$ is homotopic to the loop $\gamma = H_{P}^{\prime}|_{[0, 1 / 2] \times \{ 1 / 2 \}}$ in $\partial E(K)$ relative to the base point $p_{0}$.
Since the loop $\alpha \cdot \beta^{-1}$ lies in $F_{0}$, a lift of $\gamma$ to $F \times \mathbb{R}$ is a loop.
Thus $\gamma$ is homotopic to a loop $\gamma^{\prime}$ in $F_{0} \cap \partial E(K)$ relative to $p_{0}$.
In conclusion, we found that $\alpha$ is homotopic to $\gamma^{\prime} \cdot \beta$ relative to $p_{0}$.
Since $[\gamma^{\prime}]$ is an element of $H$, $[\alpha]$ is an element of $H [\beta]$.
\end{proof}

Now we prove the theorem.

\begin{proof}[Proof of Theorem \ref{thm:fibered}]
Lemmas \ref{lem:fibered_1} and \ref{lem:fibered_2} assert that the map $f : H \backslash G \rightarrow Q(K)$ sending $H [\alpha]$ to $[(D_{0}, \alpha)]$ is well-defined and bijective.
Further $f$ preserves the quandle operation, because we have the following equations:
\begin{eqnarray*}
 [(D_{0}, \alpha)] \ast [(D_{0}, \beta)]
 & = & [(D_{0}, \alpha \cdot \beta^{-1} \cdot \partial D_{0} \cdot \beta)] \\
 & = & [(D_{0}, (\partial D_{0})^{-1} \cdot \alpha \cdot \beta^{-1} \cdot \partial D_{0} \cdot \beta)] \\
 & = & [(D_{0}, \varphi(\alpha \cdot \beta^{-1}) \cdot \beta)],
\end{eqnarray*}
where $\varphi(\alpha \cdot \beta^{-1})$ denotes a representative of $\varphi([\alpha \cdot \beta^{-1}])$.
We remark that the second equality follows from a similar argument depicted in Figure \ref{fig:winding}.
Thus $f$ is an isomorphism from the quotient of the generalized Alexander quandle $(G, \varphi)$ by $H$ to the knot quandle $Q(K)$.
\end{proof}

\section{Finiteness and equivalence of knot quandles}
\label{sec:finiteness_and_equivalence_of_knot_quandles}

Associated with a 1-knot $k$ and a positive integer $m$, as mentioned in Section \ref{sec:introduction}, we have a fibered 2-knot called the \emph{$m$-twist-spun $k$}.
We denote the $m$-twist-spun $k$ by $\tau^{m} k$.
In this section, in light of Corollary \ref{cor:fibered}, we investigate knot quandles of several $\tau^{m} k$ and discuss their finiteness and equivalence.

Let $\Sigma_{m}$ denote the $m$-fold cyclic branched covering of $S^{3}$ along $k$ and $\punc{\Sigma_{m}}$ the once-punctured $\Sigma_{m}$.
It is known, by Zeeman \cite{Zeeman1965}, that the fiber of $\tau^{m} k$ is $\punc{\Sigma_{m}}$ and the monodromy $\varphi$ of $\tau^{m} k$ is induced by the covering transformation of $\Sigma_{m}$, which fixes $k$ and is of order $m$, in a natural way.
We only use the facts in the remaining.
We thus do not explain about twist-spinning any longer in this paper.
We refer the reader to \cite{CKS2004, Kamada2017, Zeeman1965} for more details on twist-spinning.

We first consider the case that $k$ is the trefoil knot.
In this case, it is computed in \cite[Subsection 10.D]{Rolfsen1976} that the fundamental group $\pi_{1}(\punc{\Sigma_{m}})$ is isomorphic to the cyclic group $\mathbb{Z} / 3 \mathbb{Z}$, the quaternion group, the binary tetrahedral group, or the binary icosahedral group if $m$ is equal to $2$, $3$, $4$, or $5$ respectively.
We note that the cardinalities of those groups are $3$, $8$, $24$, and $120$ respectively.
Further $\pi_{1}(\punc{\Sigma_{1}})$ is trivial, because $\Sigma_{1}$ is homeomorphic to $S^{3}$.
We thus have the following theorem immediately.

\begin{theorem}
\label{thm:twist_spun_trefoil}
The cardinality of the knot quandle of the $m$-twist-spun trefoil is $1$, $3$, $8$, $24$, or $120$ if $m$ is equal to $1$, $2$, $3$, $4$, or $5$ respectively.
\end{theorem}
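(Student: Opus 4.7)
The plan is to read the conclusion straight off Corollary \ref{cor:fibered}. Since $\tau^{m} k$ is a fibered $2$-knot, its fiber is $\punc{\Sigma_{m}}$, so setting $G = \pi_{1}(\punc{\Sigma_{m}})$ and letting $\varphi$ be the monodromy described in Zeeman's theorem, Corollary \ref{cor:fibered} gives $Q(\tau^{m} k) \cong (G, \varphi)$ as quandles. The underlying set of a generalized Alexander quandle $(G, \varphi)$ is by definition $G$ itself, so $|Q(\tau^{m} k)| = |G| = |\pi_{1}(\punc{\Sigma_{m}})|$. Thus the theorem reduces to identifying $|\pi_{1}(\punc{\Sigma_{m}})|$ for the trefoil and each $m \in \{1,2,3,4,5\}$.

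Next I would collect the five values case by case. For $m = 1$, the $1$-fold branched cover $\Sigma_{1}$ is $S^{3}$, so $\punc{\Sigma_{1}}$ is contractible and the group is trivial, giving cardinality $1$. For $m = 2, 3, 4, 5$, I would cite the explicit computation in Rolfsen \cite[Subsection 10.D]{Rolfsen1976}, which identifies $\pi_{1}(\punc{\Sigma_{m}})$ for the trefoil with $\mathbb{Z}/3\mathbb{Z}$, the quaternion group $Q_{8}$, the binary tetrahedral group $2T$, and the binary icosahedral group $2I$, having orders $3$, $8$, $24$, and $120$ respectively. Assembling these gives the claimed list of cardinalities.

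There is no real obstacle; the only point that requires a word of care is making sure Corollary \ref{cor:fibered} applies (which it does since $n = 2 \geq 2$, so the boundary subgroup $H$ is trivial and the quotient step disappears), and that the passage from the quandle $(G, \varphi)$ to its cardinality is indeed $|G|$ and not something smaller. The group-theoretic content is entirely imported from Rolfsen, so the proof amounts to a one-line invocation of the corollary followed by a table of known fundamental groups.
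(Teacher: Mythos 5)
Your proposal matches the paper's argument exactly: the paper likewise applies Corollary \ref{cor:fibered} (via Zeeman's identification of the fiber of $\tau^{m}k$ with $\punc{\Sigma_{m}}$), notes that the cardinality of the generalized Alexander quandle $(G,\varphi)$ is $|G|$, and reads off the orders of $\pi_{1}(\punc{\Sigma_{m}})$ from Rolfsen's computation together with the triviality of $\pi_{1}(\punc{\Sigma_{1}})$. No discrepancies.
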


The knot quandle of the $1$-twist-spun trefoil is obviously isomorphic to the trivial quandle of order 1.
We will see later that the knot quandle of the $2$-twist-spun trefoil is isomorphic to the dihedral quandle of order 3.
It will be discussed in \cite{InoueIP} that the knot quandle of the $3$-, $4$-, or $5$-twist-spun trefoil is isomorphic to a quandle derived from rotational symmetries of the 16-, 24-, or 600-cell respectively.

Moreover it will be discussed in \cite{InoueIP} that the cardinality of the knot quandle of the $m$-twist-spun trefoil is infinite if $m$ is greater than or equal to $6$.
Thus the cardinality of the knot quandle of the $m$-twist-spun trefoil is finite if and only if $1 \leq m \leq 5$.

We turn to the next case.
Suppose $p$ is a positive odd integer and $q$ an integer satisfying $(p, q) = 1$ and $|q| < p$.
Associated with $p$ and $q$, we have a 1-knot called the \emph{$2$-bridge knot} of type $(p, q)$ (see \cite{BZH2014} for example).
We let $k$ be the $2$-bridge knot of type $(p, q)$.
In this case, it is known that $\Sigma_{2}$ is homeomorphic to the lens space $L(p, q)$.
Thus the fundamental group $\pi_{1}(\punc{\Sigma_{2}})$ is isomorphic to the cyclic group $\mathbb{Z} / p \mathbb{Z}$.
Further the monodromy $\varphi$ of $\tau^{2} k$ is the standard involution of $\mathbb{Z} / p \mathbb{Z}$.
Indeed, consider a Heegaard splitting of $\Sigma_{2}$ into two solid tori.
Then each restriction of the covering transformation of $\Sigma_{2}$ to a solid torus is the extension of the hyperelliptic involution of its boundary (see \cite{HR1985} for example).
We thus have the following theorem.

\begin{theorem}
\label{thm:2bridge}
The knot quandle of the $2$-twist-spun $2$-bridge knot of type $(p, q)$ is isomorphic to the dihedral quandle of order $p$.
\end{theorem}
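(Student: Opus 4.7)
The plan is to deduce the theorem directly from Corollary \ref{cor:fibered}, using the structural facts about $\tau^{2} k$, $\Sigma_{2}$, and the monodromy that are already collected in the paragraph preceding the statement. Since $\tau^{2} k$ is a $2$-knot, Corollary \ref{cor:fibered} identifies $Q(\tau^{2} k)$ with the generalized Alexander quandle $(G, \varphi)$, where $G = \pi_{1}(\punc{\Sigma_{2}})$ and $\varphi$ is the monodromy. Thus the goal reduces to exhibiting an isomorphism $(G, \varphi) \cong (\mathbb{Z}/p\mathbb{Z}, \mathrm{inv})$.

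First I would identify $G$. Schubert's classical theorem on $2$-bridge knots gives $\Sigma_{2} \cong L(p, q)$, and puncturing a closed $3$-manifold at an interior point does not alter the fundamental group, so $G \cong \pi_{1}(L(p, q)) \cong \mathbb{Z}/p\mathbb{Z}$. Next I would identify $\varphi$ as the standard involution $\mathrm{inv}$. By Zeeman's description quoted in the excerpt, $\varphi$ is induced by the nontrivial covering transformation $\sigma$ of $\Sigma_{2} \to S^{3}$. A $2$-bridge decomposition of $k$ lifts to the genus-one Heegaard splitting $L(p, q) = V_{1} \cup V_{2}$ into two solid tori, and $\sigma$ respects this splitting: on each solid torus $V_{i}$ it is the extension of the hyperelliptic involution of the boundary torus $\partial V_{i}$ (as indicated in the excerpt, following \cite{HR1985}).

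The key computation will then be to check that this extension acts as $-\mathrm{id}$ on $\pi_{1}(V_{i}) \cong \mathbb{Z}$: the hyperelliptic involution acts as $-\mathrm{id}$ on $\pi_{1}(\partial V_{i}) \cong \mathbb{Z}^{2}$, and this descends to the quotient by the meridian. Applying Van Kampen's theorem to $L(p, q) = V_{1} \cup V_{2}$ then shows that the map $\sigma_{\ast}$ on $\pi_{1}(L(p, q)) \cong \mathbb{Z}/p\mathbb{Z}$ sends each element to its inverse. Since one can choose the base point $p_{0}$ so that the puncture and base point are compatible with $\sigma$, the induced automorphism $\varphi$ on $G$ is exactly $\mathrm{inv}$. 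Combining the two identifications, $(G, \varphi) \cong (\mathbb{Z}/p\mathbb{Z}, \mathrm{inv})$, which is by definition the dihedral quandle of order $p$.

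The main obstacle is the Heegaard-splitting step: one needs to be explicit enough about the way the $2$-bridge structure of $k$ lifts to a genus-one Heegaard splitting of $\Sigma_{2}$, and about the action of the hyperelliptic involution on the resulting fundamental group, to legitimately invoke \cite{HR1985} and conclude $\sigma_{\ast} = -\mathrm{id}$. Everything else is bookkeeping: quoting Schubert, quoting Zeeman, and applying Corollary \ref{cor:fibered}.
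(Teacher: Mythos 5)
Your proposal is correct and follows essentially the same route as the paper: apply Corollary \ref{cor:fibered}, identify $\pi_{1}(\punc \Sigma_{2}) \cong \pi_{1}(L(p,q)) \cong \mathbb{Z}/p\mathbb{Z}$, and recognize the monodromy as the standard involution via the covering transformation's action on a genus-one Heegaard splitting (citing \cite{HR1985}). The extra detail you supply about the hyperelliptic involution acting as $-\mathrm{id}$ and the Van Kampen bookkeeping only fills in what the paper leaves implicit.
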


We note that the trefoil knot is equivalent to the $2$-bridge knot of type $(3, 1)$.
Therefore the knot quandle of the $2$-twist-spun trefoil is isomorphic to the dihedral quandle of order $3$ as mentioned above.

In light of the works by Atiyah-Bott \cite[Remark for Theorem 7.27]{AB1968} and Brody \cite{Brody1960}, it is known that the $2$-twist-spun $2$-bridge knot of type $(p, q)$ and the $2$-twist-spun $2$-bridge knot of type $(p^{\prime}, q^{\prime})$ are equivalent to each other if and only if $p^{\prime} = p$ and $q^{\prime} \equiv \pm q^{\pm 1} \pmod{p}$.
We thus have the following corollary immediately.

\begin{corollary}
\label{cor:2bridge_1}
Let $p$ be a positive odd integer and $q_{i}$ an integer satisfying $(p, q_{i}) = 1$ and $|q_{i}| < p$ {\upshape (}$i = 1, 2${\upshape )}.
Suppose $k_{i}$ is the $2$-bridge knot of type $(p, q_{i})$.
Assume that $q_{2}$ is not equivalent to $\pm q_{1}^{\pm 1}$ modulo $p$.
Then, although they are inequivalent $2$-knots, the $2$-twist-spun $k_{1}$ and the $2$-twist-spun $k_{2}$ have the same knot quandle.
\end{corollary}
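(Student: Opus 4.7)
The plan is to combine the two pieces of information that have just been assembled: Theorem~\ref{thm:2bridge} on the quandle side, and the Atiyah--Bott/Brody classification of $2$-twist-spun $2$-bridge knots on the equivalence side. Both are already at our disposal, so the corollary should fall out in essentially one line per clause.

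First I would verify that $\tau^{2} k_{1}$ and $\tau^{2} k_{2}$ have isomorphic knot quandles. By Theorem~\ref{thm:2bridge}, applied separately to each of $k_{1}$ and $k_{2}$, we have
\[
 Q(\tau^{2} k_{1}) \cong R_{p} \cong Q(\tau^{2} k_{2}),
\]
where $R_{p}$ denotes the dihedral quandle of order $p$; the point is that the conclusion of Theorem~\ref{thm:2bridge} depends only on the first coordinate $p$ of the type and not on $q$. Composing the two isomorphisms gives $Q(\tau^{2} k_{1}) \cong Q(\tau^{2} k_{2})$, i.e.\ the two $2$-knots share a knot quandle.

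Next I would dispose of inequivalence. By the theorem of Atiyah--Bott~\cite{AB1968} and Brody~\cite{Brody1960} recalled just before the statement, the $2$-twist-spun $2$-bridge knots $\tau^{2} k_{1}$ and $\tau^{2} k_{2}$ are equivalent as $2$-knots if and only if $q_{2} \equiv \pm q_{1}^{\pm 1} \pmod{p}$. The hypothesis of the corollary is precisely the negation of this condition, so $\tau^{2} k_{1}$ and $\tau^{2} k_{2}$ cannot be equivalent.

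Since neither step involves real work beyond citing results already available, there is no genuine obstacle; the only thing to be careful about is to make sure that ``same knot quandle'' in the statement is interpreted as ``isomorphic as quandles'', which is the natural reading throughout the paper and the sense in which Theorem~\ref{thm:2bridge} is formulated. With this convention fixed, the two clauses of the corollary follow independently from Theorem~\ref{thm:2bridge} and the Atiyah--Bott/Brody classification, and the proof is complete.
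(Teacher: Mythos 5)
Your proposal is correct and matches the paper's argument exactly: the paper derives this corollary "immediately" from Theorem \ref{thm:2bridge} (whose conclusion depends only on $p$) together with the Atiyah--Bott/Brody classification recalled just before the statement. Nothing further is needed.
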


Suppose $l$ is an integer greater than or equal to $2$.
It is easy to see that, for sufficient large odd integers $p$, there are integers $q_{1}, q_{2}, \dots, q_{l}$ satisfying $(p, q_{i}) = 1$ ($1 \leq i \leq l$), $|q_{i}| < p$ ($1 \leq i \leq l$), and $q_{j} \not\equiv \pm q_{i}^{\pm 1} \pmod{p}$ ($1 \leq i < j \leq l$).
We thus have the following corollary in light of Theorem \ref{thm:2bridge} and Corollary \ref{cor:2bridge_1}.

\begin{corollary}
\label{cor:2bridge_2}
For any integer $l$ greater than or equal to $2$, there are infinitely many $l$-tuples of mutually inequivalent $2$-knots having the same knot quandle.
\end{corollary}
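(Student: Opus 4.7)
The plan is to produce, for every sufficiently large odd integer $p$, an $l$-tuple of pairwise inequivalent $2$-twist-spun $2$-bridge knots of type $(p, q_{i})$, and then to let $p$ range over infinitely many such integers. Theorem \ref{thm:2bridge} will guarantee that, within each tuple, all knot quandles coincide with the dihedral quandle of order $p$, while Corollary \ref{cor:2bridge_1} will supply the mutual inequivalence.

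The first step is a counting argument on the set $U_{p} = \{\, q \in \mathbb{Z} \mid 0 < |q| < p, \, (p, q) = 1 \,\}$ modulo the equivalence relation $q \sim q^{\prime}$ defined by $q^{\prime} \equiv \pm q^{\pm 1} \pmod{p}$. Since the group generated by $q \mapsto -q$ and $q \mapsto q^{-1}$ on $(\mathbb{Z}/p\mathbb{Z})^{\times}$ has order at most $4$, each equivalence class on $U_{p}$ consists of at most four residues modulo $p$. Hence the number of equivalence classes is at least $\varphi(p) / 4$, which tends to infinity with $p$. In particular, for all sufficiently large odd $p$, there exist integers $q_{1}, q_{2}, \dots, q_{l}$ in $U_{p}$ lying in $l$ distinct classes, i.e.\ satisfying $q_{j} \not\equiv \pm q_{i}^{\pm 1} \pmod{p}$ for all $1 \leq i < j \leq l$.

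For such a choice of $p$ and $q_{1}, \dots, q_{l}$, let $k_{i}$ denote the $2$-bridge knot of type $(p, q_{i})$ and $K_{i} = \tau^{2} k_{i}$. By Theorem \ref{thm:2bridge}, each $Q(K_{i})$ is isomorphic to the dihedral quandle of order $p$, so the $l$-tuple $(K_{1}, \dots, K_{l})$ consists of $2$-knots sharing the same knot quandle. By Corollary \ref{cor:2bridge_1}, the pairwise condition $q_{j} \not\equiv \pm q_{i}^{\pm 1} \pmod{p}$ ensures that the $K_{i}$ are mutually inequivalent as $2$-knots.

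Finally, letting $p$ range over an infinite set of odd integers for which the above construction is possible (for example, all sufficiently large odd primes, since $\varphi(p) = p - 1 \to \infty$) yields infinitely many such $l$-tuples; distinct values of $p$ even produce tuples whose common knot quandle is non-isomorphic, since dihedral quandles of different orders have different cardinalities. The only nontrivial point in this plan is the counting step ensuring that enough equivalence classes exist, but the order-four bound on classes makes this essentially immediate; the remainder of the argument is a direct application of Theorem \ref{thm:2bridge} and Corollary \ref{cor:2bridge_1}.
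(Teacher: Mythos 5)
Your proposal is correct and follows essentially the same route as the paper: the paper leaves the existence of $q_{1}, \dots, q_{l}$ in distinct classes modulo $q \mapsto \pm q^{\pm 1}$ as an ``easy to see'' remark for sufficiently large odd $p$, and then invokes Theorem \ref{thm:2bridge} and Corollary \ref{cor:2bridge_1} exactly as you do. Your explicit $\varphi(p)/4$ counting bound simply fills in that elementary step.
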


This corollary extends the theorem, given by Tanaka \cite{Tanaka2007}, that there are infinitely many pairs of inequivalent 2-knots having the same knot quandle.

\section*{Acknowledgments}
The author would like to express his thanks to Scott Carter, Katsumi Ishikawa, Akio Kawauchi, and Daniel Silver for useful conversations.
He is partially supported by JSPS KAKENHI Grant Number JP16K17591.

\bibliographystyle{amsplain}

\end{document}